\documentclass[12pt]{amsart}

\usepackage{amssymb}
\usepackage{bm}
\usepackage{graphicx}
\graphicspath{{./images/}} 
\usepackage{psfrag}
\usepackage{color}
\usepackage{url}
\usepackage{algpseudocode}
\usepackage{fancyhdr}
\usepackage{xy}
\input xy
\xyoption{all}

\voffset=-1.4mm
\oddsidemargin=17pt
\evensidemargin=17pt
\topmargin=26pt
\headheight=9pt     
\textheight=576pt
\textwidth=440.8pt
\parskip=0pt plus 4pt

\pagestyle{fancy}
\fancyhf{}

\fancyhead[CE]{\fontsize{9}{11}\selectfont MARIA BRAS-AMOROS AND MARLY GOTTI}
\fancyhead[CO]{\fontsize{9}{11}\selectfont ATOMICITY AND DENSITY OF PUISEUX MONOIDS}
\fancyhead[LE,RO]{\thepage}

\newtheorem{theorem}{Theorem}[section]
\newtheorem{proposition}[theorem]{Proposition}

\newtheorem{lemma}[theorem]{Lemma}
\newtheorem{cor}[theorem]{Corollary}

\theoremstyle{definition}
\newtheorem{definition}[theorem]{Definition}
\newtheorem{example}[theorem]{Example}

\numberwithin{equation}{section}


\newcommand\nn{\mathbb{N}}
\newcommand\pp{\mathbb{P}}
\newcommand\qq{\mathbb{Q}}
\newcommand\rr{\mathbb{R}}
\newcommand\zz{\mathbb{Z}}

\newcommand\gp{\mathsf{gp}}

\keywords{Puiseux monoids, factorization theory, factorization invariants, system of sets of lengths, set of distances, realization theorem, catenary degree}


\subjclass[2010]{Primary: 20M13; Secondary: 06F05, 20M14, 11B05}

\begin{document}

\mbox{}
\title{Atomicity and density of Puiseux monoids}

\author{Maria Bras-Amoros}
\address{Departament d'Enginyeria Inform{\`a}tica i Matem{\`a}tiques \\Universitat Rovira i Virgili \\ Avinguda dels Pa{\"i}sos Catalans 26 \\ E-43007 Tarragona \\ Spain}
\email{maria.bras@urv.cat}

\author{Marly Gotti}
\address{Research and Development \\ Biogen \\ Cambridge \\ MA 02142 \\ USA}
\email{marly.cormar@biogen.com}

\date{\today}

\begin{abstract}
	A Puiseux monoid is a submonoid of $(\qq,+)$ consisting of nonnegative rational numbers. Although the operation of addition is continuous with respect to the standard topology, the set of irreducibles of a Puiseux monoid is, in general, difficult to describe. In this paper, we use topological density to understand how much a Puiseux monoid, as well as its set of irreducibles, spread through $\rr_{\ge 0}$. First, we separate Puiseux monoids according to their density in $\rr_{\ge 0}$, and we characterize monoids in each of these classes in terms of generating sets and sets of irreducibles. Then we study the density of the difference group, the root closure, and the conductor semigroup of a Puiseux monoid. Finally, we prove that every Puiseux monoid generated by a strictly increasing sequence of rationals is nowhere dense in $\rr_{\ge 0}$ and has empty conductor.
\end{abstract}
\medskip

\maketitle

\section{Introduction}
\label{sec:intro}

A Puiseux monoid is an additive submonoid of $(\qq_{\ge 0},+)$. The first significant appearance of Puiseux monoids in commutative algebra seems to date back to the 1970s, when A. Grams used them in~\cite{aG74} to disprove P. Cohn's conjecture (see~\cite{pC68}) that every atomic integral domain satisfies the ACCP (i.e., every ascending chain of principal ideals eventually stabilizes). However, it was not until recently that Puiseux monoids became the focus of significant attention in factorization theory because of their rich and complex atomic structure. The first systematic study of Puiseux monoids appeared in~\cite{fG17}, and since then they have been present in the semigroup and factorization theory literature (see, for instance,~\cite{CGG20a} and~\cite{fG18b}). Recent applications of Puiseux monoids to numerical semigroups and commutative algebra can be found in~\cite{GS18} and~\cite{CG19}, respectively.

In general, the atomic structure of a Puiseux monoid can be significantly complex. Puiseux monoids range from antimatter monoids (i.e., monoids without atoms) such as $\langle 1/2^n \mid n \in \nn \rangle$ to atomic monoids whose sets of atoms are dense in $\rr_{\ge 0}$ (see Example~\ref{ex:atomically dense PM}). Even though sufficient conditions for atomicity have been found (see~\cite[Theorem~5.5]{GG18} and~\cite[Proposition~4.5]{fG19}), there is no characterization of atomic Puiseux monoids in terms of their generating sets. In this paper we study how much Puiseux monoids and, in particular, their sets of atoms, can spread through $\rr_{\ge 0}$, hoping our study can contribute towards the understanding of their atomic structure.

In Section~\ref{sec:general facts of DPM}, we subclassify Puiseux monoids according to how much they spread throughout $\rr_{\ge 0}$: Puiseux monoids that are dense (in $\rr_{\ge 0}$), Puiseux monoids that are eventually dense (i.e., dense in $[r,\infty)$ for some $r > 0$), Puiseux monoids that are somewhere dense (i.e., dense in some finite interval), and Puiseux monoids that are nowhere dense (in $\rr_{\ge 0}$). We characterize members of each of these four classes in terms of their sets of atoms and generating sets. In this section we also exhibit an atomic Puiseux monoid whose set of atoms is dense in~$\rr_{\ge 0}$.

In the first part of Section~\ref{sec:increasing PM}, we argue that the difference group (resp., the root closure) of a Puiseux monoid $M$ is dense in $\rr$ (resp., in $\rr_{\ge 0}$) provided that $M$ is not finitely generated. Then we fully describe the density of Puiseux monoids with nonempty conductor: such Puiseux monoids are nowhere dense if and only if they are finitely generated. In the second part of Section~\ref{sec:increasing PM}, we restrict our attention to increasing Puiseux monoids. A submonoid of $(\rr_{\ge 0},+)$ is called increasing provided that it can be generated by an increasing sequence. Increasing monoids were first studied in~\cite{GG18} in the context of Puiseux monoids, and then they were investigated in~\cite{mBA19,mBA20}. Increasing monoids are always atomic~\cite[Proposition~4.5]{fG19}. We conclude this paper proving that every non-finitely generated increasing Puiseux monoid is nowhere dense and has empty conductor.

\bigskip
\section{Preliminary}
\label{sec:Background and Notation}

\subsection{General Notation} In this section, we review most of the notation and terminology we shall be using later. The interested reader can consult~\cite{pG01} for background material on commutative semigroups and~\cite{GH06} for extensive information on factorization theory of atomic monoids. The symbol $\mathbb{N}$ (resp., $\mathbb{N}_0$) denotes the set of positive integers (resp., nonnegative integers), while $\pp$ denotes the set of primes. For $r \in \rr$ and $S \subseteq \rr$, we let $S_{\ge r}$ denote the set $\{s \in S \mid s \ge r\}$ and, in a similar manner, we shall use the notation $S_{> r}$. If $q \in \qq_{> 0}$, then we call the unique $a,b \in \nn$ such that $q = a/b$ and $\gcd(a,b)=1$ the \emph{numerator} and \emph{denominator} of $q$ and denote them by $\mathsf{n}(q)$ and $\mathsf{d}(q)$, respectively. For each subset $S$ of $\qq_{>0}$, we call the sets $\mathsf{n}(S) = \{\mathsf{n}(q) \mid q \in S\}$ and $\mathsf{d}(S) = \{\mathsf{d}(q) \mid q \in S\}$ the \emph{numerator set} and \emph{denominator set} of $S$, respectively.

\medskip
\subsection{Monoids}

Every time the term ``monoid" is mentioned here, we tacitly assume that the monoid in question is commutative and cancellative. Unless we specify otherwise, we use additive notation on any monoid. For a monoid $M$, we let $M^\bullet$ denote the set $M \! \setminus \! \{0\}$, and we let $M^\times$ denote the set of invertible elements of $M$. The monoid $M$ is called \emph{reduced} when $M^\times = \{0\}$. For $x,y \in M$, we say that $x$ \emph{divides} $y$ \emph{in} $M$ and write $x \mid_M y$ provided that there exists $x' \in M$ satisfying $y = x + x'$. An element $a \in M \! \setminus \! M^\times$ is \emph{irreducible} or an \emph{atom} if whenever $a = u + v$ for $u,v \in M$, either $u \in M^\times$ or $v \in M^\times$. The set of atoms of $M$ is denoted by $\mathcal{A}(M)$.

For the remaining of this section, assume that $M$ is a reduced monoid. Let $S$ be a subset of $M$. If no proper submonoid of $M$ contains $S$, then $S$ is called a \emph{set of generators} (or \emph{generating set}) of $M$, in which case we write $M = \langle S \rangle$. The monoid $M$ is called \emph{finitely generated} if $M$ can be generated by a finite set; otherwise, $M$ is called \emph{non-finitely generated}. It is not hard to see that $\mathcal{A}(M)$ is contained in any set of generators of $M$. If $M = \langle \mathcal{A}(M) \rangle$, then $M$ is called \emph{atomic}. 
By contrast, $M$ is called \emph{antimatter} if $\mathcal{A}(M)$ is empty. The notion of being antimatter was introduced and studied in~\cite{CDM99} in the setting of integral domains.

\medskip
\subsection{Factorization Theory}

The (multiplicative) free commutative monoid on $\mathcal{A}(M)$ is denoted by $\mathsf{Z}(M)$ and called \emph{factorization monoid} of $M$; the elements of $\mathsf{Z}(M)$ are called \emph{factorizations}. If $z = a_1 \cdots a_\ell \in \mathsf{Z}(M)$ for some $\ell \in \nn_0$ and $a_1, \dots, a_\ell \in \mathcal{A}(M)$, then $|z| := \ell$ is called the \emph{length} of the factorization $z$. The unique homomorphism $\pi_M \colon \mathsf{Z}(M) \to M$ satisfying that $\pi_M(a) = a$ for all $a \in \mathcal{A}(M)$ is called the \emph{factorization homomorphism} of $M$. For each $x \in M$, the set
\[
	\mathsf{Z}(x) := \pi_M^{-1}(x) \subseteq \mathsf{Z}(M)
\]
is called the \emph{set of factorizations} of $x$. The monoid $M$ is said to be an {\it FF-monoid} (or a \emph{finite factorization monoid}) if $\mathsf{Z}(x)$ is finite for all $x \in M$. It follows from~\cite[Proposition~2.7.8(4)]{GH06} that every finitely generated monoid is an FF-monoid.
For each $x \in M$, the \emph{set of lengths} of $x$ is defined by
\[
	\mathsf{L}(x) := \{|z| : z \in \mathsf{Z}(x)\}.
\]
The set of lengths is an arithmetic invariant of atomic monoids that has been very well studied in recent years (see~\cite{aG16} and the references therein). If $\mathsf{L}(x)$ is a finite set for all $x \in M$, then $M$ is called a \emph{BF-monoid} (or a \emph{bounded factorization monoid}). Clearly, every FF-monoid is a BF-monoid.

\medskip
\subsection{Numerical and Puiseux Monoids}

A special class of atomic monoids is that one comprising all \emph{numerical monoids}, i.e., cofinite submonoids of $(\nn_0,+)$. Each numerical monoid has a unique minimal set of generators, which is finite. Let $N$ be a numerical monoid. If $\{a_1, \dots, a_n\}$ is the minimal set of generators of $N$, then $\mathcal{A}(N) = \{a_1, \dots, a_n\}$ and $\gcd(a_1, \dots, a_n) = 1$. Thus, every numerical monoid is atomic and contains only finitely many atoms. The \emph{Frobenius number} of $N$, denoted by $\mathfrak{f}(N)$, is the minimum $n \in \nn$ such that $\zz_{> n} \subseteq N$. Readers can find an excellent exposition of numerical monoids in~\cite{GR09} and some of their applications in~\cite{AG16}.

A \emph{Puiseux monoid} is a submonoid of $(\qq_{\ge 0},+)$. Clearly, every numerical monoid is a Puiseux monoid. In addition, a Puiseux monoid is isomorphic to a numerical monoid if and only if the former is finitely generated~\cite[Proposition~3.2]{fG17}. Puiseux monoids are not always atomic; for instance, consider $\langle 1/2^n \mid n \in \nn\rangle$. However, if $M$ is a Puiseux monoid such that $0$ is not a limit point of $M^\bullet$, then $M$ is a BF-monoid~\cite[Proposition~4.5]{fG19} and, therefore, atomic. The atomic structure of Puiseux monoids was first studied in~\cite{fG17} and~\cite{GG18}.

\bigskip
\section{Atomicity and Density}
\label{sec:general facts of DPM}

Our goal is to understand how much the set of atoms of an atomic Puiseux monoid can spread through $\rr_{\ge 0}$. To do this it will be convenient to sub-classify Puiseux monoids into classes according to their topological density in positive rays of the real line.

Let $(X, \mathcal{T})$ be a topological space. If $Y \subseteq X$, then $Y$ naturally becomes a topological space with the subspace topology, in which case we write $(Y, \mathcal{T}|_Y)$. Let $A,B \subseteq X$ such that $A \subseteq B$. Recall that $A$ is a dense set of $(X, \mathcal{T})$ (or dense in $X$) if the closure of $A$ is $X$. We say that $A$ is \emph{dense in} $B$ if $A \cap B$ is a dense set of $(B, \mathcal{T}|_B)$. Also recall that $A$ is a nowhere dense of $(X, \mathcal{T})$ (or nowhere dense in $X)$ if the interior of its closure is empty. We say that $A$ is \emph{nowhere dense in} $B$ if $A \cap B$ is a nowhere dense set of $(B, \mathcal{T}|_B)$. Here we only consider the real line $\rr$ with the Euclidean topology.

\begin{definition}
	Let $M$ be a Puiseux monoid. We say that $M$ is
	\begin{enumerate}
		\item \emph{dense} if $M$ is dense in~$\rr_{\ge 0}$;
		\smallskip
		
		\item \emph{eventually dense} if there exists $r \in \rr_{\ge 0}$ such that $M$ is dense in $\rr_{> r}$;
		\smallskip
		
		\item \emph{somewhere dense} if there exists a nonempty open interval $(r,s) \subseteq \rr_{\ge 0}$ such that $M$ is dense in $(r,s)$;
		\smallskip
		
		\item \emph{nowhere dense} if for all $r,s \in \rr_{\ge 0}$ with $r < s$, the set $M$ is not dense in the open interval $(r,s)$.
	\end{enumerate}
\end{definition}

Observe that, \emph{a priori}, none of the definitions above provides information about the set of atoms or any generating set of $M$. However, the density of a Puiseux monoid according to the previous definitions can be characterized in terms of the topological distribution of its set of atoms.

\begin{proposition} \label{prop:characterization of dense PM}
	For an atomic Puiseux monoid $M$ the following conditions are equivalent.
	\begin{enumerate}
		\item $M$ is dense.
		\smallskip
		
		\item $0$ is a limit point of $M^\bullet$.
		\smallskip
		
		\item $0$ is a limit point of any generating set of $M$.
		\smallskip
		
		\item $0$ is a limit point of $\mathcal{A}(M)$.
	\end{enumerate}
\end{proposition}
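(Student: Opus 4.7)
The plan is to prove the chain of implications (1) $\Rightarrow$ (2) $\Rightarrow$ (4) $\Rightarrow$ (3) $\Rightarrow$ (2) $\Rightarrow$ (1). The implication (4) $\Rightarrow$ (3) is immediate from the fact that $\mathcal{A}(M)$ is contained in every generating set of $M$ (a consequence of atomicity, as noted in the Preliminaries). Similarly, (3) $\Rightarrow$ (2) is essentially trivial since any generating set of $M$ is a subset of $M^{\bullet}$. The implication (1) $\Rightarrow$ (2) follows because if $M$ is dense in $\mathbb{R}_{\ge 0}$, then in particular every neighborhood of $0$ in $\mathbb{R}_{\ge 0}$ contains points of $M$ other than $0$ itself, so $0$ is a limit point of $M^{\bullet}$.

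The two implications carrying real content are (2) $\Rightarrow$ (1) and (2) $\Rightarrow$ (4). For (2) $\Rightarrow$ (1), I would argue that if $0$ is a limit point of $M^{\bullet}$, then for every $x \in \mathbb{R}_{\ge 0}$ and every $\varepsilon > 0$ we can pick $m \in M^{\bullet}$ with $0 < m < \varepsilon$, and then some nonnegative integer multiple $km$ lies in the interval $(x-\varepsilon, x+\varepsilon)$ because consecutive multiples $km$ and $(k+1)m$ differ by $m < \varepsilon$. Since $km \in M$, this proves density of $M$ in $\mathbb{R}_{\ge 0}$.

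For (2) $\Rightarrow$ (4), the key observation is that atomicity lets us extract small atoms from small elements. Given $\varepsilon > 0$, choose $m \in M^{\bullet}$ with $m < \varepsilon$; since $M$ is atomic, $m$ admits a factorization $m = a_1 + \cdots + a_\ell$ with each $a_i \in \mathcal{A}(M)$, and since all summands are positive, every $a_i$ satisfies $0 < a_i \le m < \varepsilon$. Hence $\mathcal{A}(M)$ meets every punctured neighborhood of $0$, i.e., $0$ is a limit point of $\mathcal{A}(M)$.

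I do not anticipate a substantive obstacle here: atomicity does exactly the work needed to transfer ``smallness'' from arbitrary monoid elements to atoms, and the density argument is the classical one for subgroups of $\mathbb{R}$ adapted to the monoid setting. The only point requiring a little care is to remember that in (2) $\Rightarrow$ (1) one argues over multiples of a single small element (which stay in $M$ because $M$ is closed under addition and contains $0$), rather than trying to directly produce elements of $M$ near $x$ from first principles.
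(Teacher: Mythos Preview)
Your proof is correct and follows essentially the same route as the paper's: the paper also treats $(1)\Rightarrow(2)$ as immediate, proves $(2)\Rightarrow(1)$ by the same ``integer multiples of a small element'' argument, and handles the equivalence of $(2)$, $(3)$, $(4)$ via the containments $\mathcal{A}(M)\subseteq S\subseteq M$ for any generating set $S$. You are in fact more explicit than the paper about the step $(2)\Rightarrow(4)$ (extracting a small atom from a factorization of a small element), which the paper compresses into a single sentence. One tiny quibble: a generating set need not literally be a subset of $M^\bullet$ (it could contain $0$), but since $0$ being a limit point of $S$ is the same as $0$ being a limit point of $S\setminus\{0\}\subseteq M^\bullet$, your implication $(3)\Rightarrow(2)$ is unaffected.
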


\begin{proof}
	Clearly, (1) implies (2). Since $M$ is reduced, any generating set of $M$ must contain $\mathcal{A}(M)$ and, therefore, (2), (3), and (4) are equivalent. To prove that any of the conditions (2), (3), and (4) implies~(1), assume that $0$ is a limit point of $M^\bullet$. Let $\{r_n\}$ be a sequence in $M^\bullet$ converging to $0$. Fix $p \in \rr_{> 0}$. To check that $p$ is a limit point of $M$, fix $\epsilon > 0$. Because $\lim_{n \to \infty} r_n = 0$, there exists $n \in \nn$ such that $r_n < \min\{p, \epsilon\}$. Take $m = \max\{k \in \zz \mid p - kr_n > 0 \}$, and set $r = mr_n$. Then we have that $0 < p - r  = p - (m+1) r_n + r_n \le r_n < \epsilon$. As for any $\epsilon > 0$ we have found $r \in M \setminus \{p\}$ with $|p-r|< \epsilon$, it follows that $p$ is a limit point of $M$. So $M$ is a dense Puiseux monoid.
\end{proof}

\begin{cor} \label{cor:density and atomicity}
	If a Puiseux monoid is not dense, then it is atomic.
\end{cor}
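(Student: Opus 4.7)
The plan is to prove the contrapositive: if a Puiseux monoid $M$ is not atomic, then $M$ is dense in $\rr_{\ge 0}$. The proof will rest on two ingredients, both already available from the preliminaries and from the proposition just proved.

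The first ingredient I would invoke is the cited result \cite[Proposition~4.5]{fG19}, recalled in the Preliminaries: whenever $0$ is not a limit point of $M^\bullet$, the Puiseux monoid $M$ is a BF-monoid and, in particular, atomic. Taking the contrapositive, if $M$ fails to be atomic, then $0$ must be a limit point of $M^\bullet$. This is the only ``hard'' input, and it is done for us.

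The second ingredient is the approximation argument used in the proof of Proposition~\ref{prop:characterization of dense PM}: given a sequence $\{r_n\} \subseteq M^\bullet$ with $r_n \to 0$, every $p > 0$ can be approximated to arbitrary precision by integer multiples $m r_n \in M$. Applying this to the limit point of $M^\bullet$ at $0$ produced in the previous step yields density of $M$ in $\rr_{\ge 0}$, which completes the proof.

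The one point worth flagging, and what I expect to be the only subtle bookkeeping issue, is that Proposition~\ref{prop:characterization of dense PM} is stated under the hypothesis that $M$ is atomic, whereas here I need to invoke its implication (2)$\Rightarrow$(1) for a non-atomic $M$. However, a careful reading of that proof shows that atomicity is never actually used in the approximation step, so the implication transfers verbatim. In practice I would either cite that step with a parenthetical remark noting that the atomicity hypothesis is unnecessary for this direction, or simply re-run the three-line approximation argument inline to make the corollary self-contained.
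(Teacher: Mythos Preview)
Your proposal is correct and matches the paper's approach: both rely on \cite[Proposition~4.5]{fG19} together with the implication (2)$\Rightarrow$(1) of Proposition~\ref{prop:characterization of dense PM}, the paper arguing directly and you via the contrapositive. Your observation that atomicity is not needed for (2)$\Rightarrow$(1) is exactly the remark the paper makes immediately after the corollary.
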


\begin{proof}
	It follows immediately from~\cite[Proposition~4.5]{fG19}, which implies that a Puiseux monoid $M$ is a BF-monoid when $0$ is not a limit point of $M^\bullet$. 
\end{proof}

Observe that the conditions~(1), (2), and~(3) in Proposition~\ref{prop:characterization of dense PM} are equivalent even when $M$ is not atomic. In addition, condition~(4) always implies all the three previous conditions. However, the atomicity of $M$ is required to obtain condition~(4) from any of the previous conditions; for example, consider the antimatter Puiseux monoid $\langle 1/2^n \mid n \in \nn \rangle$.
\medskip

Let us characterize the atomic Puiseux monoids that are somewhere dense.

\begin{proposition} \label{prop:PM with somewhere dense generating sets}
	Let $M$ be an atomic Puiseux monoid. Then the following conditions are equivalent.
	\begin{enumerate}
		\item $\mathcal{A}(M)$ is somewhere dense in $\rr_{\ge 0}$.
		\smallskip
		
		\item Each generating set of $M$ is somewhere dense in $\rr_{\ge 0}$.
	\end{enumerate}
	If any of the above conditions holds, then $M$ is a somewhere dense Puiseux monoid.
\end{proposition}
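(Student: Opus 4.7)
The plan is to observe that, once one unpacks the definitions, both implications are essentially tautological, the only substantive input being that $M$ is reduced (automatic for Puiseux monoids, since the only invertible element of $(\qq_{\ge 0},+)$ is $0$) together with the atomicity hypothesis.

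For the implication $(1) \Rightarrow (2)$, I would start by fixing a nonempty open interval $(r,s) \subseteq \rr_{\ge 0}$ in which $\mathcal{A}(M)$ is dense. Given any generating set $S$ of $M$, the paper has already noted that for a reduced monoid every generating set contains $\mathcal{A}(M)$, so $S \cap (r,s) \supseteq \mathcal{A}(M) \cap (r,s)$. Since the right-hand side is dense in $(r,s)$, so is the left-hand side, which means $S$ is somewhere dense in $\rr_{\ge 0}$. For the converse $(2) \Rightarrow (1)$, the key observation is that the atomicity hypothesis gives $M = \langle \mathcal{A}(M) \rangle$, so $\mathcal{A}(M)$ itself qualifies as a generating set of $M$; applying (2) to this particular generating set yields~(1).

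For the final assertion, suppose $\mathcal{A}(M)$ is dense in some nonempty open interval $(r,s) \subseteq \rr_{\ge 0}$. Since $\mathcal{A}(M) \subseteq M$, we have $M \cap (r,s) \supseteq \mathcal{A}(M) \cap (r,s)$, and the latter is dense in $(r,s)$; hence $M$ is dense in $(r,s)$, i.e., $M$ is somewhere dense. There is no real obstacle here — the statement is a bookkeeping consequence of the fact that in a reduced atomic monoid, $\mathcal{A}(M)$ is simultaneously contained in every generating set and is itself a generating set, so any density property that is monotone under taking supersets (within $\rr_{\ge 0}$) passes freely between $\mathcal{A}(M)$, arbitrary generating sets of $M$, and $M$ itself.
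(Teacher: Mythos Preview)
Your proof is correct and follows essentially the same approach as the paper's own proof: the paper likewise argues that $(1)\Rightarrow(2)$ because every generating set of a reduced monoid contains $\mathcal{A}(M)$, that $(2)\Rightarrow(1)$ because atomicity makes $\mathcal{A}(M)$ itself a generating set, and that the final assertion is immediate. You have simply spelled out the details more explicitly.
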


\begin{proof}
	Since $M$ is reduced, every generating set contains $\mathcal{A}(M)$. Therefore~(1) implies~(2). Since $M$ is atomic, $\mathcal{A}(M)$ is a generating set of $M$ and, therefore,~(2) implies~(1). The last statement follows straightforwardly.
\end{proof}

The equivalent conditions in Proposition~\ref{prop:PM with somewhere dense generating sets} are not superfluous as there are examples of atomic Puiseux monoids whose sets of atoms are not only dense in certain interval, but they span to a whole interval.

\begin{example}
	Take $r,s \in \rr_{> 0}$ such that $r < s < 2r$. Now consider the Puiseux monoid $M := \langle (r,s) \cap \qq \rangle$. Since $0$ is not a limit point of $M^\bullet$, it follows that $M$ is atomic. On the other hand, the condition $2r > s$ implies that $s$ is a lower bound for $M^\bullet + M^\bullet$. Hence $\mathcal{A}(M) = (r,s) \cap \qq$.
\end{example}

What is even more striking is the existence of an atomic Puiseux monoid whose set of atoms is dense in $\rr_{\ge 0}$.

\begin{example} \label{ex:atomically dense PM}
	First, we verify that the set $S = \{m/p^n \mid m,n \in \nn \text{ and } p \nmid m\}$ is dense in $\rr_{\ge 0}$ for every $p \in \pp$. To see this, take $\ell \in \rr_{> 0}$ and then fix $\epsilon > 0$. Now take $n,m \in \nn$ with $1/p^n < \epsilon$ and $m/p^n < \ell \le (m+1)/p^n$. Clearly, $s := m/p^n$ of $S$ satisfies that $|\ell - s| < \epsilon$. Since $\epsilon$ was arbitrarily taken, $\ell$ is a limit point of $S$. Because $0$ is also a limit point of $S$, we conclude that $S$ is dense in $\rr_{\ge 0}$.
	
	Now take $\{r_n\}$ to be a sequence of positive rationals with underlying set $R$ dense in $\rr_{\ge 0}$. Let $\{p_k\}$ be an increasing enumeration of the prime numbers. It follows from the previous paragraph, that for each $k \in \nn$, the set
	\[
		\bigg\{\frac m{p_k^n} \ \bigg{|} \ m,n \in \nn \text{ and } p_k \nmid m \bigg\}
	\]
	is dense in $\rr_{\ge 0}$. Therefore, for every natural $k$, there exist naturals $m_k$ and $n_k$ satisfying that $|r_k - m_k/p_k^{n_k}| < 1/k$. Consider the Puiseux monoid
	\begin{equation} \label{eq:PM with dense set of atoms}
		M := \bigg \langle \frac{m_k}{p_k^{n_k}} \ \bigg{|} \ k \in \nn \bigg \rangle.
	\end{equation}
	As distinct generators in \eqref{eq:PM with dense set of atoms} have powers of distinct primes in their denominators, $M$ must be atomic and $\mathcal{A}(M) = \{m_k/p_k^{n_k} \mid k \in \nn \}$. To check that $\mathcal{A}(M)$ is dense in $\rr_{\ge 0}$, take $x \in \rr_{\ge 0}$ and then fix $\epsilon > 0$. Since $R$ is dense in $\rr_{\ge 0}$, there exists $k \in \nn$ large enough such that $1/k < \epsilon/2$ and $|x - r_k| < \epsilon/2$. So $|r_k - m_k/p_k^{n_k}| < 1/k < \epsilon/2$. Then
	\[
		\bigg{|} x - \frac{m_k}{p_k^{n_k}} \bigg{|} < |x - r_k| + \bigg{|} r_k - \frac{m_k}{p_k^{n_k}} \bigg{|} < \epsilon.
	\]
	Hence $\mathcal{A}(M)$ is dense in $\rr_{\ge 0}$.
\end{example}

We conclude this section proving that being somewhere dense and being eventually dense are equivalent conditions in the setting of Puiseux monoids.

\begin{proposition} \label{prop:eventually dense characterization}
	For a Puiseux monoid $M$, the following conditions are equivalent.
	\begin{enumerate}
		\item $M$ is eventually dense.
		\smallskip
		
		\item $M$ is somewhere dense.
	\end{enumerate}
\end{proposition}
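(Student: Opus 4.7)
The implication $(1) \Rightarrow (2)$ is immediate: if $M$ is dense in $\rr_{>r}$, then $M$ is dense in the nonempty open interval $(r, r+1) \subseteq \rr_{\ge 0}$, so $M$ is somewhere dense.

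For the converse, suppose $M$ is dense in some nonempty open interval $(r,s) \subseteq \rr_{\ge 0}$. The guiding observation is that for every $m \in M$ the set $m + (M \cap (r,s))$ is contained in $M$ and dense in $(r+m, s+m)$, so $M$ itself is dense in each translated interval $(r+m, s+m)$. The plan is then to produce a family of translations $m \in M$ whose images $(r+m, s+m)$ cover a half-line $(R, \infty)$; density of $M$ in each piece will yield density in the whole ray, giving eventual density. To obtain such a family I use a pair of close elements of $M$: by density I can select $a, b \in M \cap (r,s)$ with $0 < \delta := b - a < s - r$, and for each $n \in \nn$ the $n$-th layer $\{ia + jb : i + j = n\} \subseteq M$ is then the arithmetic progression $na,\, na + \delta,\, \dots,\, nb$ with common difference $\delta$.

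Translating $(r,s)$ by the points of these progressions gives
\[
U_n \;=\; \bigcup_{j=0}^{n} \bigl(r + na + j\delta,\ s + na + j\delta\bigr),
\]
and the inequality $\delta < s - r$ forces consecutive intervals inside $U_n$ to overlap, so $U_n = (r + na,\, s + nb)$. A short calculation then shows that $U_n$ and $U_{n+1}$ overlap (equivalently, $r + (n+1)a < s + nb$) precisely when $n > (a - (s - r))/\delta$. Thus, beyond an explicit threshold $N$, the union $\bigcup_{n \ge N} U_n$ equals $(R, \infty)$ with $R := r + Na$, and $M$ is dense in each $U_n$, hence in the whole ray. The main subtlety is securing the initial pair $a, b$ with gap strictly less than $s - r$: this is exactly where the density hypothesis on $(r,s)$ is substantively used, and the remaining work is the arithmetic bookkeeping needed to fuse consecutive layers of progressions into a single half-line.
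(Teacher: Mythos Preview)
Your argument is correct, but it takes a somewhat different route from the paper's. The paper observes directly that, since $M$ is closed under addition, the $n$-fold sumset of $M\cap(r,s)$ is contained in $M$ and dense in the dilated interval $(nr,ns)$; once $n(s-r)>r$ these dilates overlap, so $\bigcup_{n\ge N}(nr,ns)=\rr_{>Nr}$ and density on the ray follows immediately. You instead fix two nearby elements $a<b$ in $M\cap(r,s)$ and use only \emph{translations} of $(r,s)$ by the monoid elements $ia+jb$, first fusing the translates with $i+j=n$ into a single interval $(r+na,\,s+nb)$ via the gap condition $\delta<s-r$, and then fusing consecutive layers once $n$ is large. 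Your approach is more hands-on and requires two tiers of overlap bookkeeping, while the paper's dilation argument avoids choosing specific elements and reaches the covering ray in one step; on the other hand, your version makes explicit that translation by a single pair of monoid elements already suffices, which is a slightly finer observation.
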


\begin{proof}
	It is clear that (1) implies (2). To verify that~(2) implies~(1), suppose that $M$ is somewhere dense, i.e., there exists an interval $(r,s)$ with $r < s$ such that $M$ is topologically dense in $(r,s)$. Since $M$ is closed under addition, it follows that $M$ is topologically dense in $(nr, ns)$ for each $n \in \nn$. Take $N \in \nn$ such that $n(s-r) > r$ for every $n \ge N$. In this case, we can see that $nr < (n+1) r < ns < (n+1)s$ for each $n \ge N$ which means that
	\[
		\rr_{> Nr} = \bigcup_{n=N}^\infty \big( nr, ns \big).
	\]
	Now fix $p \in \rr_{> Nr}$ and $\epsilon > 0$. Take $n \in \nn$ such that $p \in (nr, ns)$. Since $M$ is topologically dense in $(nr,ns)$, there exists $x \in M \cap (nr,ns)$ such that $|x-p| < \epsilon$. Hence $M$ is topological dense in $\rr_{> Nr}$ and, as a result, eventually dense.
\end{proof}

\begin{cor} \label{cor: if the atoms are somewhere dense the monoid is eventually dense}
	Let $M$ be a Puiseux monoid. If $\mathcal{A}(M)$ is somewhere dense, then $M$ is eventually dense.
\end{cor}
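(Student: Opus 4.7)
The plan is to observe that the corollary is essentially an immediate consequence of Proposition~\ref{prop:eventually dense characterization}, bridged only by the trivial containment $\mathcal{A}(M) \subseteq M$. I would structure the proof in two short steps.

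First, I would note that if $\mathcal{A}(M)$ is somewhere dense, then there exist $r,s \in \rr_{\ge 0}$ with $r < s$ such that $\mathcal{A}(M)$ is dense in the open interval $(r,s)$; that is, $\mathcal{A}(M) \cap (r,s)$ is a dense subset of $(r,s)$ in the subspace topology. Since $\mathcal{A}(M) \subseteq M$, we have $\mathcal{A}(M) \cap (r,s) \subseteq M \cap (r,s)$, so $M \cap (r,s)$ also has closure containing $(r,s)$. Therefore $M$ is dense in $(r,s)$, which is precisely the definition of $M$ being somewhere dense.

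Second, I would invoke Proposition~\ref{prop:eventually dense characterization}, which asserts that for any Puiseux monoid, being somewhere dense is equivalent to being eventually dense. Applying that equivalence to $M$ gives the desired conclusion.

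There is no real obstacle here: the corollary is a formal consequence of the previous proposition together with the fact that the set of atoms sits inside the monoid. The only point worth emphasizing is that, unlike Proposition~\ref{prop:PM with somewhere dense generating sets}, this corollary does not require $M$ to be atomic, since we only need the one-way implication that somewhere-denseness of the subset $\mathcal{A}(M)$ passes up to the ambient monoid $M$.
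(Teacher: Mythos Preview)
Your proposal is correct and matches the paper's approach: the paper states the corollary without proof, treating it as immediate from Proposition~\ref{prop:eventually dense characterization} together with the trivial containment $\mathcal{A}(M) \subseteq M$, exactly as you spell out. Your remark that atomicity is not needed here is a nice clarification.
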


The converse of Corollary~\ref{cor: if the atoms are somewhere dense the monoid is eventually dense} does not hold in general, as our next example illustrates. First, recall that the Cantor set $C$ is the subset of $\rr$ one obtains starting with the interval $[0,1]$ and then removing iteratively the open middle third of each of the intervals in each iteration. Formally, we can write
\[
	C := [0,1] \setminus \bigcup _{n=0}^{\infty } \bigcup_{k=0}^{3^n-1} \left( {\frac{3k+1}{3^{n+1}}},{\frac {3k+2}{3^{n+1}}} \right).
\]
It is well known that the Cantor set is an uncountable nowhere dense subset of $[0,1]$ satisfying that $C + C = [0,2]$. Also, it is well known that the set $E$ consisting of the end points of all the intervals obtained in each iteration in the construction of $C$ is a subset of $C$ that is dense in $C$.

\begin{example}\footnote{Looking at endpoints of the Cantor set was kindly suggested by Jyrko Correa and Harold Polo.} \label{ex:atomic and eventually dense PM whose set of atoms is nowhere dense}
	As $C+C = [0,2]$, the equality $(1 + C) + (1 + C) = [2,4]$ holds. Let us argue that $[(1 + C) \cap \qq] + [(1 + C) \cap \qq]$ is dense in the interval $[2,4]$. Note that $1 + C$ is just the Cantor set constructed in the interval $[1,2]$. Since $E \subseteq \qq \cap C$ is dense in $C$, we obtain that $[(1 + C) \cap \qq] + [(1 + C) \cap \qq]$ is dense in $[2,4]$, as desired. Now consider the Puiseux monoid $M = \langle (1 + C) \cap \qq \rangle$. It is clear that $\mathcal{A}(M) = (1 + C) \cap \qq$, which implies that $M$ is atomic. On the other hand, $\mathcal{A}(M)$ is nowhere dense as it is a subset of $1 + C$. However, we have seen before that $M$ is dense in $[2,4]$.
\end{example}

For the sake of completeness, we record the following proposition whose proof is straightforward.

\begin{proposition}
	For a Puiseux monoid $M$, the following statements are equivalent.
	\begin{enumerate}
		\item $M$ is nowhere dense.
		\smallskip
		
		\item Each generating set of $M$ is nowhere dense.
	\end{enumerate}
	If any of the above statements holds, then $M$ is atomic and $\mathcal{A}(M)$ is nowhere dense.
\end{proposition}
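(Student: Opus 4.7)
The plan is to observe that this proposition is a direct formal consequence of the monotonicity of the nowhere dense property under taking subsets, combined with Corollary~\ref{cor:density and atomicity}. First I would verify the implication (2) $\Rightarrow$ (1): since $M$ is itself a generating set of $M$, the assumption that every generating set is nowhere dense already applies to $M$. For the reverse implication (1) $\Rightarrow$ (2), I would argue that any generating set $S$ of $M$ satisfies $S \subseteq M$, so $\overline{S} \subseteq \overline{M}$, and therefore $\mathrm{int}(\overline{S}) \subseteq \mathrm{int}(\overline{M}) = \emptyset$; hence $S$ is nowhere dense in $\rr_{\ge 0}$.

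For the final clause, I would first invoke Corollary~\ref{cor:density and atomicity}: a nowhere dense Puiseux monoid is, in particular, not dense in $\rr_{\ge 0}$, so it must be atomic. Then, since $M$ is reduced the set of atoms $\mathcal{A}(M)$ is contained in $M$, and the same monotonicity argument from the previous paragraph (applied to $\mathcal{A}(M)$ in place of $S$) yields that $\mathcal{A}(M)$ is nowhere dense in $\rr_{\ge 0}$.

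There is no genuine obstacle here: the entire proof rests on the standard topological fact that the property of being nowhere dense is hereditary under passage to subsets, together with a result already established in the paper. The only point worth flagging explicitly is the elementary inclusion $\mathrm{int}(\overline{S}) \subseteq \mathrm{int}(\overline{M})$ whenever $S \subseteq M$, which follows from the monotonicity of both closure and interior with respect to set inclusion.
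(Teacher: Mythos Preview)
Your proof is correct and matches what the paper intends: the paper records this proposition ``for the sake of completeness'' and declares its proof ``straightforward'' without writing one out, so your argument---based on the hereditary nature of nowhere density under subsets, the trivial observation that $M$ generates itself, and an appeal to Corollary~\ref{cor:density and atomicity}---is exactly the kind of verification the authors had in mind.
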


Every finitely generated Puiseux monoid is isomorphic to a numerical monoid and, therefore, must be a nowhere dense Puiseux monoids. In addition, there are non-finitely generated atomic Puiseux monoids that are nowhere dense. We shall determine a class of such monoids in Theorem~\ref{thm:increasing PMs}.

\bigskip
\section{Difference Group, Closures, and Conductor}
\label{sec:increasing PM}

\subsection{Difference Group and the Root Closure}

The \emph{difference group} of a monoid $M$, denoted by $\gp(M)$, is the abelian group (unique up to isomorphism) satisfying that any abelian group containing a homomorphic image of $M$ will also contain a homomorphic image of $\gp(M)$. When $M$ is a Puiseux monoid, the difference group $\gp(M)$ can be taken to be a subgroup of $(\qq,+)$, namely,
\[
	\gp(M) = \{ x-y \mid x,y \in M \}.
\]
The difference group of a Puiseux monoid can be characterized in terms of the denominators of its elements. Before stating such a characterization it is convenient to introduce the notion of the root closure. The \emph{root closure} $\widetilde{M}$ of a monoid $M$ with difference group $\gp(M)$ is defined by
\[
	\widetilde{M} := \big\{ x \in \gp(M) \mid nx \in M \ \text{for some} \ n \in \nn \big\}.
\]
The monoid $M$ is called \emph{root-closed} provided that $\widetilde{M} = M$. The difference group and the root closure of a Puiseux monoid $M$ were described by Geroldinger et al. in~\cite{GGT19} in terms of the set of denominators of $M$ in the following way.

\begin{proposition} \cite[Proposition~3.1]{GGT19} \label{prop:closure of a PM}
	Let $M$ be a Puiseux monoid, and let $n = \gcd( \mathsf{n}(M^\bullet))$. Then
	\begin{equation} \label{eq:equality of closures of a PM}
		\widetilde{M} = \gp(M) \cap \qq_{\ge 0} = n \bigg\langle \frac{1}{d} \ \bigg{|} \ d \in \mathsf{d}(M^\bullet) \bigg\rangle.
	\end{equation}
\end{proposition}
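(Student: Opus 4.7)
The plan is to establish the cyclic chain of inclusions
\[
  N \ \subseteq\ \widetilde{M} \ \subseteq\ \gp(M) \cap \qq_{\ge 0} \ \subseteq\ N,
\]
where $N := n\bigl\langle 1/d \mid d \in \mathsf{d}(M^\bullet)\bigr\rangle$, from which both equalities of the proposition follow at once. Two of these inclusions are essentially immediate. The containment $\widetilde{M} \subseteq \gp(M) \cap \qq_{\ge 0}$ is clear from the definition of the root closure, since $kx \in M \subseteq \qq_{\ge 0}$ with $k \in \nn$ forces $x \ge 0$. For $N \subseteq \widetilde{M}$, one observes that $\widetilde{M}$ is itself a submonoid of $\qq_{\ge 0}$ and checks the inclusion on generators: picking any $m \in M^\bullet$ with $\mathsf{d}(m) = d$, the identity $\mathsf{n}(m) \cdot (n/d) = n m \in M$ shows $n/d \in \widetilde{M}$.

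For the remaining inclusion $\gp(M) \cap \qq_{\ge 0} \subseteq N$, I would first verify the auxiliary containment $M \subseteq N$: every $m \in M^\bullet$ factors as $(\mathsf{n}(m)/n) \cdot (n/\mathsf{d}(m))$, which is a nonnegative integer multiple of the generator $n/\mathsf{d}(m)$ of $N$ (using $n \mid \mathsf{n}(m)$). This gives $\gp(M) \subseteq \gp(N)$. Given $x \in \gp(M) \cap \qq_{\ge 0}$, the plan is to write $x = \sum_j c_j m_j$ as a finite $\zz$-linear combination with $m_j \in M^\bullet$, set $L := \operatorname{lcm}_j \mathsf{d}(m_j)$, and check that $x = K/L$ with $K \in \nn_0$ (since $x \ge 0$ and $L > 0$) and with $n \mid K$ (since $n$ divides every $\mathsf{n}(m_j)$). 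Then $x = (K/n) \cdot (n/L)$ lies in $N$ as soon as $n/L \in N$, i.e., as soon as $L \in \mathsf{d}(M^\bullet)$.

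The main obstacle is therefore the auxiliary lemma that $\mathsf{d}(M^\bullet)$ is closed under the formation of (binary, and hence finite) least common multiples. To prove this for $d, d' \in \mathsf{d}(M^\bullet)$, I would pick $m, m' \in M^\bullet$ realising these denominators and search for $\alpha \in \nn$ with
\[
  \alpha m + m' \;=\; \frac{\alpha\,\mathsf{n}(m)\,a + \mathsf{n}(m')\,b}{L},
\]
where $L = \operatorname{lcm}(d,d')$, $a = L/d$, and $b = L/d'$, such that the numerator is coprime to $L$; then the denominator of $\alpha m + m'$ in lowest terms is exactly $L$. For each prime $p \mid L$, exactly one of three cases holds (the case $p \mid a$ and $p \mid b$ is excluded because $\gcd(a,b)=1$): $p \mid a$ and $p \nmid b$; $p \nmid a$ and $p \mid b$; or $p \nmid a$ and $p \nmid b$. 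The coprimality of $\mathsf{n}(m)$ with $d$ and of $\mathsf{n}(m')$ with $d'$ disposes of the first case automatically, while each of the other two imposes at most a single forbidden residue for $\alpha$ modulo $p$. A Chinese Remainder Theorem argument over the finitely many primes dividing $L$ then produces the required $\alpha \in \nn$, so $\alpha m + m' \in M^\bullet$ has denominator precisely $L$, placing $L$ in $\mathsf{d}(M^\bullet)$ and completing the proof.
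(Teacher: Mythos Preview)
The paper does not supply its own proof of this proposition: it is quoted from \cite[Proposition~3.1]{GGT19} and used as a black box, so there is nothing in the present paper to compare your argument against.

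That said, your strategy via the cyclic chain $N \subseteq \widetilde{M} \subseteq \gp(M)\cap\qq_{\ge 0} \subseteq N$ is sound, and the lcm-closure lemma for $\mathsf{d}(M^\bullet)$ together with its CRT verification is correct and is indeed the heart of the matter. There is one small gap in your treatment of $N \subseteq \widetilde{M}$: by the paper's definition, $\widetilde{M} = \{x \in \gp(M) : kx \in M \text{ for some } k \in \nn\}$, so exhibiting $\mathsf{n}(m)\cdot(n/d) = nm \in M$ is not enough; you must also check $n/d \in \gp(M)$. This is easily repaired. Writing $\mathsf{n}(m) = nq$ one has $q\cdot(n/d) = m \in \gp(M)$, and $d\cdot(n/d) = n \in \gp(M)$ because each $\mathsf{n}(m') = \mathsf{d}(m')\,m'$ lies in $\gp(M)$ and $n$ is a $\zz$-combination of finitely many such numerators; since $\gcd(q,d)=1$, a B\'ezout identity then gives $n/d \in \gp(M)$. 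With this patch your argument is complete.
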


The normal closure and the complete integral closure are two other algebraic closures of a monoid that play an important role in semigroup theory and factorization theory. However, we do not formally introduce any of these two notions of closures as, in virtue of \cite[Lemma~2.5]{GR19} and \cite[Proposition~3.1]{GGT19}, they are both equivalent to the root closure in the context of Puiseux monoids.

\begin{example}
	Take $r \in \qq_{> 0}$ such that $\mathsf{d}(r) \in \pp$, and consider the Puiseux monoid $M = \langle r^n \mid n \in \nn_0 \rangle$. Since $1 \in M$, one sees that $\gcd(\mathsf{n}(M^\bullet)) = 1$. Also, it is clear that $\mathsf{d}(M^\bullet) = \{\mathsf{d}(r)^n \mid n \in \nn_0\}$. Then Proposition~\ref{prop:closure of a PM} guarantees that
	\[
		\widetilde{M} = \bigg\langle \frac{1}{\mathsf{d}(r)^n} \ \bigg{|} \ n \in \nn_0 \bigg\rangle,
	\]
	which is the nonnegative cone of the localization $\zz_{\mathsf{d}(r)}$ of $\zz$ at the multiplicative set $\{q^n \mid n \in \nn_0\}$. Observe that $M$ is closed under multiplication, and so it is a cyclic rational semiring. Various factorization invariants of cyclic rational semirings were recently investigated in~\cite{CGG20} by Chapman et al.
\end{example}

Although it is difficult in general to determine whether a given Puiseux monoid is dense, a criterion to determine the density of its difference group and its root closure can be easily established, as the following proposition shows.

\begin{proposition} \label{prop:density of difference group and root closure}
	Let $M$ be a Puiseux monoid. Then the following statements are equivalent.
	\begin{enumerate}
		\item $\gp(M)$ is dense in $\rr$;
		\smallskip
		
		\item $\widetilde{M}$ is dense in $\rr_{\ge 0}$;
		\smallskip
		
		\item $M$ is not finitely generated.
	\end{enumerate}
\end{proposition}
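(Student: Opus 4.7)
The plan is to prove the cycle $(2) \Leftrightarrow (1) \Leftrightarrow (3)$, leaning on Proposition~\ref{prop:closure of a PM} and on the classical dichotomy that every subgroup of $(\rr,+)$ is either cyclic (hence discrete) or dense. The equivalence $(1) \Leftrightarrow (2)$ is essentially bookkeeping once the equality $\widetilde{M} = \gp(M) \cap \qq_{\ge 0}$ is invoked: because $\gp(M)$ is symmetric about $0$, its density in $\rr$ is equivalent to the density of its nonnegative part in $\rr_{\ge 0}$. Concretely, for $x > 0$ any approximation up to $\epsilon < x$ is automatically nonnegative, while $x = 0$ amounts to the existence of arbitrarily small positive elements of $\gp(M)$, so density transfers cleanly between the two statements.

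For $(1) \Rightarrow (3)$ I would argue by contrapositive. If $M$ admits a finite generating set $\{r_1,\dots,r_k\} \subseteq \qq_{\ge 0}$, then $\gp(M)$ equals the additive subgroup of $\qq$ generated by $r_1,\dots,r_k$, and every finitely generated subgroup of $\qq$ is cyclic, say of the form $\alpha\zz$ with $\alpha \in \qq_{\ge 0}$. Such a group is discrete in $\rr$, hence not dense.

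The main content sits in $(3) \Rightarrow (1)$, again by contrapositive. Suppose $\gp(M)$ is not dense in $\rr$; the discrete/dense dichotomy forces $\gp(M) = \alpha\zz$ for some $\alpha \ge 0$. The case $\alpha = 0$ is trivial, so take $\alpha > 0$. Then $M \subseteq \gp(M) \cap \qq_{\ge 0} = \alpha \nn_0$, and $N := \alpha^{-1} M$ is a submonoid of $(\nn_0,+)$ satisfying $\gp(N) = \alpha^{-1}\gp(M) = \zz$. The crux is to conclude that $N$ is finitely generated: from $\gp(N) = \zz$ one obtains $\gcd(N \setminus \{0\}) = 1$, so finitely many elements of $N$ already have gcd $1$ and the submonoid they generate is cofinite in $\nn_0$; this forces $N$ itself to be a numerical monoid and hence finitely generated, whence $M = \alpha N$ is finitely generated as well. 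The only mildly delicate step is this last reduction from ``$\gcd(N\setminus\{0\}) = 1$'' to ``$N$ is cofinite in $\nn_0$'', but it is a standard fact about numerical semigroups and introduces no genuine obstacle.
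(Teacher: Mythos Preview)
Your argument is correct, but it takes a different route from the paper's. The paper proves the cycle $(1)\Rightarrow(2)\Rightarrow(3)\Rightarrow(1)$ and works throughout via the denominator description in Proposition~\ref{prop:closure of a PM}: for $(2)\Rightarrow(3)$ it notes that if $M$ is finitely generated then $\mathsf{d}(M^\bullet)$ is finite, so a suitable integer multiple of $\widetilde{M}$ sits inside $\nn_0$ and cannot be dense; for $(3)\Rightarrow(1)$ it argues that a non-finitely generated $M$ must have infinite $\mathsf{d}(M^\bullet)$, hence $0$ is a limit point of $\widetilde{M}^\bullet$, and then invokes Proposition~\ref{prop:characterization of dense PM} to obtain density of $\widetilde{M}$ and thus of $\gp(M)=\widetilde{M}\cup(-\widetilde{M})$. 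By contrast, you bypass the denominator machinery entirely and instead exploit the classical cyclic/dense dichotomy for additive subgroups of $\rr$, reducing the non-dense case directly to a submonoid of $\nn_0$ with difference group $\zz$ and finishing with the standard numerical-semigroup fact. Your approach is more conceptual and makes the structural reason for the equivalence transparent (the subgroup dichotomy), while the paper's proof stays closer to the explicit description of $\widetilde{M}$ already established and ties the result more tightly into the surrounding framework; both ultimately lean on the finiteness of submonoids of $\nn_0$, so neither is essentially deeper than the other.
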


\begin{proof}
	To prove that~(1) implies~(2), suppose that $\gp(M)$ is dense in $\rr$. It is clear that every additive subgroup of $\qq$ is symmetric with respect to $0$. This, together with Proposition~\ref{prop:closure of a PM}, guarantees that $\widetilde{M} = \gp(M) \cap \qq_{\ge 0}$ is dense in $\rr_{\ge 0}$, which is condition~(2).
	\smallskip
	
	Let us argue now that~(2) implies~(3). Suppose, by way of contradiction, that the monoid $M$ is finitely generated. Then $\mathsf{d}(M^\bullet)$ is a finite set, and it follows from Proposition~\ref{prop:closure of a PM} that $m \widetilde{M}$ is a submonoid of $(\nn_0,+)$, where $m = \text{lcm}( \mathsf{d}(M^\bullet))$. Therefore the monoid $m \widetilde{M}$ is not dense in $\rr_{\ge 0}$. However, this clearly implies that $\widetilde{M}$ is not dense in $\rr_{\ge 0}$, which is a contradiction.
	\smallskip
	
	Finally, we show that~(3) implies~(1). Assume that $M$ is not finitely generated. Then $\mathsf{d}(M^\bullet)$ contains infinitely many elements as, otherwise, $\text{lcm}( \mathsf{d}(M^\bullet)) M$ would be a submonoid of $(\nn_0,+)$ and, therefore, finitely generated. Then it follows from Proposition~\ref{prop:closure of a PM} that $0$ is a limit point of $\widetilde{M}^\bullet$, and it follows from Proposition~\ref{prop:characterization of dense PM} that $\widetilde{M}$ is a dense Puiseux monoid. Hence Proposition~\ref{prop:closure of a PM} ensures that $\gp(M) = \widetilde{M} \cup -\widetilde{M}$ is dense in $\rr$.
\end{proof}

\medskip
\subsection{The Conductor of a Puiseux Monoid}

Let $M$ be a monoid. The \emph{conductor} of a Puiseux monoid $M$ is defined to be
\begin{equation} \label{eq:conductor}
	\mathfrak{c}(M) := \{ x \in \gp(M) \mid x + \widetilde{M} \subseteq M \}.
\end{equation}
It is clear that $\mathfrak{c}(M)$ is a subsemigroup of the group $\gp(M)$. Although it is more convenient for our purposes to define the conductor of a Puiseux monoid in terms of its root closure, we would like to remark that in general the conductor of a monoid is defined in terms of its complete integral closure; see for example, \cite[Definition~2.3.1]{GH06}. However, recall that the notions of root closure and complete integral closure coincide in the setting of Puiseux monoids.

\begin{example} \label{ex:conductor of NMs}
	Let $M$ be a numerical monoid, and let $\mathfrak{f}(M)$ be the Frobenius number of $M$. It follows from~Proposition~\ref{prop:closure of a PM} that $\gp(M) = \zz$ and $\widetilde{M} = \nn_0$. For $n \in M$ with $n \ge \mathfrak{f}(M) + 1$, the inclusion $n + \widetilde{M} = n + \nn_0 \subseteq M$ holds . In addition, for each $n \in \zz$ with $n \le \mathfrak{f}(M)$ the fact that $\mathfrak{f}(M) \in n + \widetilde{M}$ implies that $n + \widetilde{M} \nsubseteq M$. Thus,
	\begin{equation} \label{eq:Frobenius number and conductor}
		\mathfrak{c}(M) = \{ n \in \zz \mid n \ge \mathfrak{f}(M) + 1 \}.
	\end{equation}
	As the equality of sets~(\ref{eq:Frobenius number and conductor}) shows, the minimum of $\mathfrak{c}(M)$ is $\mathfrak{f}(M) + 1$, namely, the conductor number of $M$ as usually defined in the setting of numerical monoids.
\end{example}

The conductor of a Puiseux monoid has been recently described in~\cite{GGT19} as follows.

\begin{proposition} \cite[Proposition~3.2]{GGT19} \label{prop:conductor of a PM} 
	Let $M$ be a Puiseux monoid. Then the following statements hold.
	\begin{enumerate}
		\item If $M$ is root-closed, then $\mathfrak{c}(M) = \widetilde{M} = M$.
		\smallskip
		
		\item If $M$ is not root-closed, then set $\sigma = \sup \, \widetilde{M} \setminus M$.
		\begin{enumerate} \label{part 2: conductor of PM}
			\item If $\sigma = \infty$, then $\mathfrak{c}(M) = \emptyset$.
			\smallskip
			
			\item If $\sigma < \infty$, then $\mathfrak{c}(M) = M_{\ge \sigma}$.
		\end{enumerate}
	\end{enumerate}
\end{proposition}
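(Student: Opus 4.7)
The plan is to exploit the characterization $\widetilde{M} = \gp(M) \cap \qq_{\ge 0}$ from Proposition~\ref{prop:closure of a PM}, which supplies the key structural fact: whenever $x,y \in \widetilde{M}$ satisfy $y \ge x$, the difference $y - x$ lies in $\gp(M) \cap \qq_{\ge 0} = \widetilde{M}$. A second useful observation is that every element of $\mathfrak{c}(M)$ already belongs to $M$, obtained by taking $0 \in \widetilde{M}$ in the defining inclusion $x + \widetilde{M} \subseteq M$. With these two facts, each case can be resolved by adding a well-chosen element of $\widetilde{M}$ to a candidate conductor element.

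For part~(1), if $\widetilde{M} = M$ then $x + \widetilde{M} = x + M \subseteq M$ for every $x \in M$, yielding $M \subseteq \mathfrak{c}(M)$; the reverse containment is immediate from the general observation above. For part~(2)(a), assume $\sigma = \infty$ and suppose for a contradiction that some $x \in \mathfrak{c}(M)$ exists. Then $x \in M$, and because $\widetilde{M} \setminus M$ is unbounded I can select $y \in \widetilde{M} \setminus M$ with $y > x$; the key structural fact gives $y - x \in \widetilde{M}$, so $y = x + (y - x) \in x + \widetilde{M} \subseteq M$, contradicting $y \notin M$.

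For part~(2)(b), the inclusion $\mathfrak{c}(M) \subseteq M_{\ge \sigma}$ replays the previous argument: were $x \in \mathfrak{c}(M)$ to satisfy $x < \sigma$, I could pick $y \in \widetilde{M} \setminus M$ with $x < y$ and again derive $y \in M$, a contradiction. For the reverse inclusion $M_{\ge \sigma} \subseteq \mathfrak{c}(M)$, take $x \in M$ with $x \ge \sigma$ and $t \in \widetilde{M}$; then $x + t \in \widetilde{M}$ and $x + t \ge \sigma$. If $x + t > \sigma$, the defining property of $\sigma$ rules out membership in $\widetilde{M} \setminus M$, so $x + t \in M$. The only genuine subtlety is the boundary case $x + t = \sigma$: coupled with $x \ge \sigma$ and $t \ge 0$ it forces $x = \sigma$ and $t = 0$, so $x + t = x \in M$ already. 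This boundary analysis is where I expect the most care is needed, but it is transparent once one notes that if $\sigma$ is attained by an element of $M$ then it causes no issue, while if $\sigma$ is attained only outside $M$ then $M_{\ge \sigma} = M_{> \sigma}$ and the boundary never enters the discussion.
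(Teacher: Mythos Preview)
Your argument is correct. Note, however, that the paper does not supply its own proof of this proposition: it is quoted verbatim from \cite[Proposition~3.2]{GGT19} and used as a black box, so there is no in-paper argument to compare against. Your proof is a clean and self-contained verification, resting exactly on the identity $\widetilde{M} = \gp(M) \cap \qq_{\ge 0}$ from Proposition~\ref{prop:closure of a PM}, which is the natural tool here. The boundary discussion in part~(2)(b) is slightly more elaborate than necessary---once you observe that $x + t = \sigma$ forces $t = 0$ and hence $x + t = x \in M$, you are done without needing to split on whether $\sigma$ is attained---but nothing is wrong.
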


Puiseux monoids with nonempty conductor have been considered in~\cite{GGT19} and, more recently, in~\cite{BG20}. The density of a Puiseux monoid with nonempty conductor can be fully understood with the following criterion.

\begin{proposition}
	Let $M$ be a Puiseux monoid with nonempty conductor. Then $M$ is nowhere dense if and only if $M$ is finitely generated.
\end{proposition}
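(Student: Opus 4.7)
The plan is to prove the contrapositive for one direction: assuming $\mathfrak{c}(M) \ne \emptyset$ and $M$ is not finitely generated, I will show that $M$ is eventually dense and, in particular, not nowhere dense. The other direction is immediate from the remarks at the end of Section~\ref{sec:general facts of DPM}: every finitely generated Puiseux monoid is isomorphic to a numerical monoid, which is clearly nowhere dense in $\rr_{\ge 0}$.

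For the nontrivial direction, first I would invoke Proposition~\ref{prop:density of difference group and root closure} to conclude that the root closure $\widetilde{M}$ is dense in $\rr_{\ge 0}$. Then I would split into cases according to Proposition~\ref{prop:conductor of a PM}. If $M$ is root-closed, then $M = \widetilde{M}$, so $M$ itself is dense in $\rr_{\ge 0}$, which is much stronger than being somewhere dense. Otherwise $M$ is not root-closed, and since $\mathfrak{c}(M) \ne \emptyset$, part~(2)(b) of that proposition forces $\sigma := \sup (\widetilde{M} \setminus M) < \infty$ and $\mathfrak{c}(M) = M_{\ge \sigma}$.

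The key observation to exploit in the second case is that the very definition of $\sigma$ as a supremum gives the containment $\widetilde{M}_{> \sigma} \subseteq M$: any $x \in \widetilde{M}$ with $x > \sigma$ cannot lie in $\widetilde{M} \setminus M$ and hence must lie in $M$. Combined with the density of $\widetilde{M}$ in $\rr_{\ge 0}$, this immediately yields that $M$ is dense in $\rr_{> \sigma}$, so $M$ is eventually dense and a fortiori somewhere dense. In either case $M$ fails to be nowhere dense, completing the contrapositive.

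No step here looks like a serious obstacle, since the heavy lifting has already been done in Propositions~\ref{prop:closure of a PM}, \ref{prop:density of difference group and root closure}, and \ref{prop:conductor of a PM}. The only subtle point to watch is to use the sharpness of $\sigma$ as a supremum (to get $\widetilde{M}_{> \sigma} \subseteq M$), rather than merely the characterization $\mathfrak{c}(M) = M_{\ge \sigma}$, since the latter by itself does not a priori force $M$ to contain all sufficiently large elements of $\widetilde{M}$.
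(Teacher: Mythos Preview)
Your proposal is correct and follows essentially the same approach as the paper: both argue the contrapositive by invoking Proposition~\ref{prop:density of difference group and root closure} to get density of $\widetilde{M}$, split into the root-closed and non-root-closed cases via Proposition~\ref{prop:conductor of a PM}, and in the latter case use that $\sigma = \sup(\widetilde{M}\setminus M) < \infty$ forces $\widetilde{M}_{>\sigma} \subseteq M$ (the paper writes this as $M_{\ge \tau} = \widetilde{M}_{\ge \tau}$ with $\tau = \sigma + 1$, which is the same observation). The reverse implication is likewise handled the same way in substance.
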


\begin{proof}
	For the direct implication, suppose that $M$ is a nowhere dense Puiseux monoid. Since $M$ has nonempty conductor, it follows from Proposition~\ref{prop:conductor of a PM} that either $M$ is root-closed or $\sup \widetilde{M} \setminus M < \infty$. We consider the following two cases.
	\smallskip
	
	\noindent CASE 1: $M$ is root-closed. Suppose, by way of contradiction, that $M$ is not finitely generated. Then Proposition~\ref{prop:density of difference group and root closure} guarantees that $\widetilde{M}$ is dense in $\rr_{\ge 0}$. Since $M$ is root-closed $M = \widetilde{M}$, and so $M$ is dense in $\rr_{\ge 0}$. However, this contradicts that $M$ is nowhere dense.
	\smallskip
	
	\noindent CASE 2: $M$ is not root-closed. In this case, the inequality $\sup \widetilde{M} \setminus M < \infty$ must hold. Taking $\tau := 1 + \sup \widetilde{M} \setminus M$, we find that $M_{\ge \tau} = \widetilde{M}_{\ge \tau}$. Suppose for a contradiction that $M$ is not finitely generated. Then $\widetilde{M}_{\ge \tau}$ would be dense in $\rr_{\ge \tau}$ by Proposition~\ref{prop:density of difference group and root closure} and, therefore, $M_{\ge \tau}$ would also be dense in $\rr_{\ge \tau}$. However, this contradicts that $M$ is nowhere dense.
	\smallskip
	
	The reverse implication follows immediately. Indeed, if $M$ is finitely generated, then $\mathsf{d}(M^\bullet)$ is finite and the set $\text{lcm}(\mathsf{d}(M^\bullet)) M \subseteq \nn$ is nowhere dense in $\rr_{\ge 0}$, which in turns implies that $M$ is nowhere dense.
\end{proof}

\medskip
\subsection{Increasing Puiseux Monoids}

A submonoid $M$ of $(\rr_{\ge 0})$ is called \emph{increasing} if $M$ can be generated by an increasing sequence of real numbers. Clearly, every Puiseux monoid generated by an increasing sequence of rationals is an example of an increasing monoid. Increasing (and decreasing) Puiseux monoids were first studied in~\cite{GG18}. If an increasing submonoid of $\rr_{\ge 0}$ (in particular, an increasing Puiseux monoid) can be generated by an unbounded (resp., bounded) sequence it is called a \emph{strongly increasing} (resp., \emph{weakly increasing}) monoid. Strongly increasing monoids were considered in~\cite{mBA19,fG19} and more recently in~\cite{mBA20} under the term ``$\omega$-monoids." Increasing Puiseux monoids are always atomic. Indeed, it was proved in~\cite[Proposition~4.5]{fG19} that every increasing Puiseux monoid is an FF-monoid.

Our goal in this final subsection is to prove that increasing Puiseux monoids are nowhere dense and also that they have empty conductors provided that they are not finitely generated. First, we will establish some needed lemmas.
\smallskip

Let $S$ be a subset of $\rr$, and take $\alpha \in \rr$. We say that $\alpha$ is a limit point of $S$ \emph{from the right} if for every $\epsilon \in \rr_{> 0}$ the set $(\alpha, \alpha+ \epsilon) \cap S$ is nonempty. 

\begin{lemma}\label{lem:contradictone}
	Let $M$ be an increasing Puiseux monoid. If $M$ has a limit point $\alpha$ from the right, then $M$ also has another limit point $\beta$ from the right with $\beta < \alpha$.
\end{lemma}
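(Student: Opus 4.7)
The plan is to extract from the factorizations of elements approaching $\alpha$ from the right an atom of arbitrarily large index, subtract it, and show that the resulting elements of $M$ converge from the right to a strictly smaller limit point. Fix an increasing generating sequence $\{a_n\}_{n \ge 1}$ for $M$ and set $L := \lim_{n\to\infty} a_n \in \rr_{>0}\cup\{\infty\}$.

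First I would rule out $L = \infty$. For any $B > 0$, only finitely many generators satisfy $a_n \le B$, so $M \cap [0,B] \subseteq \langle a_1,\dots,a_N\rangle$ for some $N$; the latter is a finitely generated Puiseux monoid, hence isomorphic to a numerical monoid and therefore discrete in $\rr$. Consequently $M$ would have no limit points at all, and the hypothesis of the lemma would be vacuously false. So I may assume $L < \infty$.

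Next, choose distinct points $\{x_k\}\subseteq M$ with $\alpha < x_k$ and $x_k \to \alpha$, and for each $k$ write $x_k = \sum_{i} c_{k,i}\, a_i$ with $c_{k,i}\in\nn_0$ and finite support. Let $i_k := \max\{i : c_{k,i} > 0\}$. A repetition of the discreteness argument shows that $\{i_k\}$ cannot stay bounded: otherwise infinitely many distinct $x_k$ would lie in a finitely generated---hence discrete---submonoid and could not accumulate at $\alpha$. After passing to a subsequence, $i_k \to \infty$, so $a_{i_k} \to L$. Since $a_{i_k} \le x_k$, taking limits forces $L \le \alpha$. Define $y_k := x_k - a_{i_k}$; because $c_{k,i_k}\ge 1$ and $x_k > \alpha \ge L \ge a_{i_k}$, we get $y_k \in M^\bullet$.

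Finally, I would split into two cases. If $L < \alpha$, the candidate is $\beta := \alpha - L$. The identity $y_k - \beta = (x_k-\alpha) + (L - a_{i_k})$ exhibits a strictly positive first summand and a non-negative second summand, so $y_k > \beta$ and $y_k \to \beta$; this shows $\beta$ is a limit point of $M$ from the right with $\beta < \alpha$, as required. If instead $L = \alpha$, then $y_k \to 0$ with $y_k > 0$, so $0$ is a limit point of $M^\bullet$; by Proposition~\ref{prop:characterization of dense PM}, $M$ is dense in $\rr_{\ge 0}$, and hence any $\beta\in(0,\alpha)$ qualifies. The only delicate step I anticipate is verifying the strict inequality $y_k > \beta$ in the first case; the rest reduces to the discreteness of finitely generated Puiseux monoids and the definition of limit point from the right.
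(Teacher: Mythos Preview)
Your proof is correct and follows essentially the same idea as the paper's: rule out an unbounded generating sequence via discreteness of finitely generated Puiseux monoids, then from elements approaching $\alpha$ from the right subtract a generator of high index so that the differences lie in $M$ and converge from the right to $\alpha - L$. The paper's execution is marginally tidier---it takes $s_n := \min\{s \in \langle a_1,\dots,a_n\rangle : s > \alpha\}$, passes to the strict-drop indices, and obtains a genuinely strictly decreasing sequence $b_n = s_{j_n} - a_{j_n}$, so no case split is needed and $\beta = \alpha - \ell$ pops out directly (with $\beta = 0$ allowed)---whereas you pick an arbitrary sequence $x_k\to\alpha^+$, subtract $a_{i_k}$, and then handle $L=\alpha$ separately via density; but this is a cosmetic difference, not a substantive one.
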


\begin{proof}
	Assume that $M$ has a limit point $\alpha$ from the right. Then $M$ cannot be finitely generated. Since $M$ is increasing it is an atomic monoid. As $M$ is not finitely generated there exists a strictly increasing sequence $(a_n)_{n \in \nn}$ with underlying set $\mathcal{A}(M)$. Notice that if the equality $\lim_{n \to \infty} a_n = \infty$ held, it would imply that $|M \cap [0,n]| < \infty$ for every $n \in \nn$, contradicting that $\alpha$ is a limit point of $M$ from the right. Hence the sequence $(a_n)_{n \in \nn}$ must converge to some $\ell \in \rr_{> 0}$.
	
	For each $n \in \nn$ the monoid $\langle a_1, \dots, a_n \rangle$ is finitely generated and, therefore,  the set $\{ s \in \langle a_1, \dots, a_n \rangle \mid s > \alpha \}$ has a minimum, which we denote by $s_n$. Clearly, the sequence $(s_n)_{n \in \nn}$ is decreasing. Since $\alpha$ is a limit point of $M$ from the right, $\lim_{n \to \infty} s_n = \alpha$. Let $(j_n)_{n \in \nn}$ be the strictly increasing sequence with underlying set $\{j \in \nn \mid s_j < s_{j-1}\}$. Notice that $a_{j_n} \mid_M s_{j_n}$ for every $n \in \nn$. Hence after setting $b_n := s_{j_n} - a_{j_n}$ for every $n \in \nn$, we obtain that each term of the sequence $(b_n)_{n \in \nn}$ belongs to $M$. In addition, $b_n = s_{j_n} - a_{j_n} > s_{j_{n+1}} - a_{j_{n+1}} = b_{n+1}$ for every $n \in \nn$, whence $(b_n)_{n \in \nn}$ is a strictly decreasing sequence. As $(b_n)_{n \in \nn}$ consists of nonnegative numbers, $\lim_{n \to \infty} b_n = \beta$ for some $\beta \in \rr_{\ge 0}$. So $\beta$ is a limit point of $M$ from the right. It is clear that $\beta = \lim_{n \to \infty} b_n  = \lim_{n \to \infty} s_{j_n} - \lim_{n \to \infty} a_{j_n} = \alpha - \ell < \alpha$, from which the lemma follows.
\end{proof}

\begin{lemma} \label{lem:increasing PMs have no limit points from the right}
	Let $M$ be an increasing Puiseux monoid. Then $M$ does not contain limit points from the right.
\end{lemma}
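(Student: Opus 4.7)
The plan is to argue by contradiction via infinite descent, iterating Lemma~\ref{lem:contradictone}. Suppose, for the sake of contradiction, that $M$ admits some limit point $\alpha$ from the right. Then, as carried out in the proof of Lemma~\ref{lem:contradictone}, $M$ cannot be finitely generated and its atom set can be enumerated as a strictly increasing sequence $(a_n)_{n \in \nn}$; moreover, the hypothesis that $\alpha$ is a limit point from the right forces $(a_n)_{n \in \nn}$ to converge to some $\ell \in \rr_{> 0}$. Crucially, both the enumeration and its limit $\ell$ depend only on $M$ (they are intrinsic to the atom set), not on the chosen $\alpha$.

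Revisiting the proof of Lemma~\ref{lem:contradictone}, the new limit point from the right it produces is in fact $\beta = \alpha - \ell$. The plan is then to iterate: apply the lemma to $\alpha$, then to $\alpha - \ell$, then to $\alpha - 2\ell$, and so on. Since the same $\ell$ is available at each step, a straightforward induction on $k$ yields that $\alpha - k \ell$ is a limit point of $M$ from the right for every $k \in \nn_0$. Because $\ell > 0$, choosing $k$ large enough makes $\alpha - k \ell$ strictly negative.

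To close the argument, I would observe that no negative real number can be a limit point of $M$ from the right: if $\gamma < 0$, then taking $\epsilon = |\gamma|/2 > 0$ gives $(\gamma, \gamma + \epsilon) \subseteq \rr_{<0}$, which is disjoint from $M \subseteq \rr_{\ge 0}$. This yields the desired contradiction. The step that warrants the most care is justifying that the quantity $\ell$ coming out of Lemma~\ref{lem:contradictone} is genuinely an invariant of $M$ and not of $\alpha$; otherwise, successive decrements might shrink and the descent could stall at a nonnegative limit. Identifying $\ell$ as the limit of the strictly increasing enumeration of $\mathcal{A}(M)$ removes this concern and makes the descent uniform, completing the proof.
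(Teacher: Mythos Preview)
Your argument is correct, and it takes a genuinely different route from the paper's.  The paper argues as follows: assuming the set $A$ of right limit points of $M$ is nonempty, it sets $\alpha = \inf A$ and first shows, via a diagonal argument on approximating sequences, that $\alpha$ is actually attained, i.e.\ $\alpha \in A$.  A single application of Lemma~\ref{lem:contradictone} then produces a right limit point strictly below $\alpha$, contradicting minimality.  In particular, the paper uses only the \emph{statement} of Lemma~\ref{lem:contradictone}, paying for this with the extra work of showing that the infimum of $A$ is a minimum.

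Your approach instead mines the \emph{proof} of Lemma~\ref{lem:contradictone} for the quantitative conclusion $\beta = \alpha - \ell$, where $\ell = \lim_n a_n$ is the limit of the increasing enumeration of $\mathcal{A}(M)$ and hence depends only on $M$.  This lets you iterate with a uniform decrement and reach a negative right limit point in finitely many steps, which is impossible since $M \subseteq \rr_{\ge 0}$.  The trade-off is clear: the paper's proof is cleaner in its use of Lemma~\ref{lem:contradictone} as a black box, while yours is more elementary (no infimum, no diagonal argument) but relies on an internal detail of the preceding lemma.  Your care in checking that $\ell$ is independent of $\alpha$ is exactly what makes the descent work; without it the decrements could shrink and the argument would collapse into the paper's infimum approach anyway.
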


\begin{proof}
	Suppose, by way of contradiction, that the set $A$ consisting of all the limit points of $M$ from the right is nonempty. Set $\alpha = \inf A$. We will first argue that $\alpha$ is indeed the minimum of $A$. To do so take a decreasing sequence $(\alpha_n)_{n \in \nn}$ whose terms belong to $A$ such that $\lim_{n \to \infty} \alpha_n = \alpha$. Because each $\alpha_n$ belongs to $A$, for each fixed $n \in \nn$ there exists a strictly decreasing sequence $(b_{n,j})_{j \in \nn}$ whose terms belong to $M$ such that $\lim_{j \to \infty} b_{n,j} = \alpha_n$. So for each $n \in \nn$ there exists $k_n \in \nn$ such that $b_{n, k_n} < \alpha_n + 1/n$. Therefore $(b_{n,k_n})_{n \in \nn}$ is a sequence of elements of $M$ satisfying that $\lim_{n \to \infty} (b_{n,k_n}) = \alpha$. Hence $\alpha$ is a limit point of $M$ from the right and, therefore, $\alpha \in A$. So $\alpha = \min A$. Now Lemma~\ref{lem:contradictone} guarantees the existence of a limit point $\beta$ of $M$ from the right such that $\beta < \alpha$. However, this contradicts the minimality of $\alpha$. Thus, one can conclude that $M$ has no limit point from the right.
\end{proof}

We are now ready to prove the main result of this section. 

\begin{theorem} \label{thm:increasing PMs}
	Let $M$ be an increasing Puiseux monoid. Then the following statements hold.
	\begin{enumerate}
		\item $M$ is nowhere dense.
		\smallskip
		
		\item $\mathfrak{c}(M)$ is nonempty if and only if $M$ is finitely generated.
	\end{enumerate}
\end{theorem}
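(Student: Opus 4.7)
The plan is to deduce both statements from results already in place: part (1) follows directly from Lemma~\ref{lem:increasing PMs have no limit points from the right}, and part (2) follows almost entirely from part (1) combined with the preceding proposition on Puiseux monoids with nonempty conductor.

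For part (1), I would argue by contradiction. Suppose that $M$ is not nowhere dense. By the definition of nowhere dense, there then exist $r,s \in \rr_{\ge 0}$ with $r < s$ such that $M$ is dense in the open interval $(r,s)$. Fix any point $x \in (r,s)$ and any $\epsilon > 0$. If $x + \epsilon \le s$, then $(x, x+\epsilon)$ is a nonempty open subset of $(r,s)$ and, by density, intersects $M$; if $x + \epsilon > s$, then $(x,s) \subseteq (x, x+\epsilon)$ is a nonempty open subset of $(r,s)$, which again intersects $M$ by density. In either case, $(x, x+\epsilon) \cap M \ne \emptyset$, so $x$ is a limit point of $M$ from the right. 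This contradicts Lemma~\ref{lem:increasing PMs have no limit points from the right}.

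For part (2), the reverse implication is immediate. If $M$ is finitely generated, then by \cite[Proposition~3.2]{fG17} the monoid $M$ is isomorphic to a numerical monoid $N$, and Example~\ref{ex:conductor of NMs} shows that $\mathfrak{c}(N)$ (and hence $\mathfrak{c}(M)$) is nonempty. For the forward implication, part~(1) already gives that $M$ is nowhere dense, and since we are assuming $\mathfrak{c}(M) \ne \emptyset$, the proposition established just before this subsection (stating that a Puiseux monoid with nonempty conductor is nowhere dense if and only if it is finitely generated) yields that $M$ is finitely generated.

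The main subtlety in the proof is minor: in part~(1) one must handle the two subcases of whether $x + \epsilon$ exceeds $s$ when verifying that $x$ is a limit point from the right. Otherwise, the theorem is a clean payoff of the technical work in Lemmas~\ref{lem:contradictone} and~\ref{lem:increasing PMs have no limit points from the right} together with the conductor proposition immediately above.
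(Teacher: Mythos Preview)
Your proof is correct. Part~(1) follows the same contradiction strategy as the paper, pivoting on Lemma~\ref{lem:increasing PMs have no limit points from the right}; the only cosmetic difference is that the paper picks a point $b \in M \cap (r,s)$ and exhibits an explicit gap $(b,\min\{s,b+\epsilon\})$ disjoint from $M$, whereas you pick an arbitrary $x \in (r,s)$ and show it is a limit point from the right. These are two sides of the same coin.

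Part~(2) is where you genuinely diverge. For the forward implication the paper argues directly: assuming $M$ is not finitely generated, it uses Proposition~\ref{prop:density of difference group and root closure} and Lemma~\ref{lem:increasing PMs have no limit points from the right} to build, above any given real $r$, an element of $\widetilde{M}\setminus M$, concluding $\sup\,\widetilde{M}\setminus M = \infty$ and hence $\mathfrak{c}(M)=\emptyset$ via Proposition~\ref{prop:conductor of a PM}. You instead recycle the proposition immediately preceding this subsection (nowhere dense with nonempty conductor $\Leftrightarrow$ finitely generated) together with part~(1). Your route is shorter and avoids redoing work already packaged in that proposition; the paper's route is more self-contained and makes the mechanism (unboundedness of $\widetilde{M}\setminus M$) explicit. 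Both are valid.
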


\begin{proof}
	To argue the statement~(1) we proceed by contradiction. Suppose that $M$ is not a nowhere dense Puiseux monoid. Then there exist $r,s \in \rr$ with $0 < r < s$ such that $M$ is dense in $(r,s)$. Take $b \in M \cap (r,s)$. Since $M$ is an increasing Puiseux monoid, Lemma~\ref{lem:increasing PMs have no limit points from the right} guarantees the existence of $\epsilon \in \rr_{> 0}$ such that $M \cap [b, b + \epsilon] = \{ b \}$. Then the set $M \cap (b, \min\{s, b + \epsilon\})$ is empty. However, this contradicts that $M$ is dense in $(r,s)$ because the open interval $(b, \min\{s, b + \epsilon\})$ is contained in the open interval $(r,s)$. Hence $M$ is a nowhere dense Puiseux monoid, as desired.
	\smallskip
	
	To establish the direct implication of~(2), suppose that $M$ be a non-finitely generated increasing Puiseux monoid. Then, by Proposition~\ref{prop:density of difference group and root closure}, the set $\widetilde{M}^\bullet$ has $0$ as a limit point. Since $0$ is not a limit point of $M^\bullet$, it follows that $M \ne \widetilde M$. Hence $M$ cannot be root-closed. On the other hand, let us argue that $\widetilde M \setminus M$ is unbounded and so $\sup \, \widetilde M \setminus M = \infty$. Indeed, let us see that for each $r \in \rr$ there exists an element $\widetilde c$ in $\widetilde M \setminus M$ with $\widetilde c > r$. Since the underlying set of $M$ is unbounded, there exists $c \in M$ such that $c > r$. It follows now from Lemma~\ref{lem:increasing PMs have no limit points from the right} that there exists $\epsilon \in \rr_{> 0}$ such that $M \cap [c, c + \epsilon] = \{c\}$. Since $M$ is not finitely generated, Proposition~\ref{prop:density of difference group and root closure} ensures that $\widetilde{M}$ is dense in $\rr_{\ge 0}$ and, therefore, there exists $\widetilde c_0 \in \widetilde M$ such that $0 < \widetilde c_0 < \epsilon$. Now take $\widetilde c := c + \widetilde c_0$. The element $\widetilde c$ belongs to $\widetilde M$ and satisfies that $c < \widetilde c < c + \epsilon$, whence $\widetilde c \in \widetilde M \setminus M$. In addition, $\widetilde{c} > c > r$. As $r$ was taken arbitrarily in $\rr$, we obtain that $\sup \, \widetilde{M} \setminus M = \infty$, as desired. Hence it follows from Proposition~\ref{prop:conductor of a PM} that $\mathfrak{c}(M)$ is necessarily empty.
	
	For the reverse implication of~(2), it suffices to notice that $M$ is finitely generated if and only if $M$ is isomorphic to a numerical monoid and that, as seen in Example~\ref{ex:conductor of NMs}, the conductor of a numerical monoid is always nonempty.
\end{proof}

We have seen that if a non-finitely generated monoid is increasing, then it has empty conductor. We would like to remark that there are non-finitely generated atomic Puiseux monoids that are not increasing and still have empty conductor. The following example illustrates this.

\begin{example}
	Consider the Puiseux monoid $M$ generated by the infinite set $A := \{1\} \cup \{1 + 1/p \mid p \in \pp\}$. Since $0$ is not a limit point of $M^\bullet$, it follows from \cite[Proposition~4.5]{fG19} that $M$ is atomic. Indeed, it is not hard to check that $\mathcal{A}(M) = A$. So $M$ is not finitely generated. On the other hand, the fact that $1$ is a limit point of $M$ from the right, along with Lemma~\ref{lem:increasing PMs have no limit points from the right}, guarantees that $M$ is not an increasing Puiseux monoid. In addition, it has been verified in \cite[Example~3.9]{GGT19} that $\mathfrak{c}(M)$ is empty.
\end{example}

\bigskip
\section*{Acknowledgments}

The authors would like to thank Felix Gotti for his valuable feedback during the preparation of this paper.
\medskip

\bigskip

\end{document}